\documentclass[conference]{IEEEtran}
\IEEEoverridecommandlockouts

\usepackage{amsmath}
\usepackage{amsthm}
\usepackage{amssymb}
\usepackage{amsfonts}
\usepackage{mathtools}
\usepackage{mathrsfs}
\usepackage{graphics} 
\usepackage{graphicx}
\usepackage{textcomp}
\usepackage{color}
\usepackage{multirow}
\usepackage{array}
\usepackage{xcolor}
\usepackage{epsfig} 
\usepackage{epstopdf}
\usepackage[noadjust]{cite}
\usepackage{tikz}
\usetikzlibrary{calc,positioning,shapes,shadows,arrows,fit}
\usepackage{float}
\usepackage[font = footnotesize]{caption}

\newcolumntype{C}[1]{>{\centering\let\newline\\\arraybackslash}m{#1}}

\usepackage{fancyref}
\usepackage{hyperref}

\newtheorem{theorem}{Theorem}
\newtheorem{lemma}{Lemma}

\theoremstyle{definition}
\newtheorem{definition}{Definition}
\newtheorem{remark}{Remark}

\newtheorem{problem}{Problem}




\begin{document}

\title{A Tube-based MPC Scheme for Interaction Control of Underwater Vehicle Manipulator Systems\\
\thanks{This work was supported by the H2020 ERC Grant BUCOPHSYS, the EU H2020 Co4Robots project, the Swedish Foundation for Strategic Research (SSF), the Swedish Research Council (VR) and the Knut och Alice Wallenberg Foundation (KAW).}
}

\author{\IEEEauthorblockN{Alexandros Nikou, Christos K. Verginis and Dimos V. Dimarogonas}
\IEEEauthorblockA{Department of Automatic Control \\
School of Electrical Engineering and Computer Science\\
KTH Royal Institute of Technology, Stockholm, Sweden \\
{\tt \{anikou,cverginis,dimos\}@kth.se}}
}

\maketitle

\begin{abstract}
Over the last years, the development of Autonomous Underwater Vehicles (AUV) with attached robotic manipulators, the so-called Underwater Vehicle Manipulator System (UVMS), has gained significant research attention, due to the ability of interaction with underwater environments. In such applications, force/torque controllers which guarantee that the end-effector of the UVMS applies desired forces/torques towards the environment, should be designed in a way that state and input constraints are taken into consideration. Furthermore, due to their complicated structure, unmodeled dynamics as well as external disturbances may arise. Motivated by this, we proposed a robust Model Predicted Control Methodology (NMPC) methodology which can handle the aforementioned constraints in an efficient way and it guarantees that the end-effector is exerting the desired forces/torques towards the environment. Simulation results verify the validity of the proposed framework.
\end{abstract}

\section{Introduction}
Most of the underwater manipulation tasks, such as maintenance of ships, underwater weld inspection, surveying oil/gas searching, require the manipulator mounted on the vehicle to be in contact with the underwater object or environment (see \cite{antonelli, cieslak2015autonomous}). The aforementioned tasks are usually complex due to highly nonlinear dynamics, the presence of uncertainties, external disturbances as well as state and control input (actuation) constraints. Thus, these constraints should be taken into account in the force control design process in an efficient way.

Motivated by the aforementioned, this paper considers the modeling of a general UVMS in compliant contact with a planar surface, and the development of  a constrained Nonlinear Model Predictive Control (NMPC) scheme for force/torque control. NMPC for manipulation of nominal system dynamics has been proposed in \cite{alex_med} for stabilization of ground vehicles with attached manipulators to pre-defined positions. In this work, we propose a novel robust tube-based NMPC force control approach that efficiently deals with state and input constraints and achieves a desired exerted force from the UVMS to the environment.  In particular, the controller consists of two terms: a nominal control input, which is computed on-line and is the outcome of a Finite Horizon Optimal Control Problem (FHOCP) that is repeatedly solved at every sampling time, for its nominal system dynamics; and an additive state feedback law which is computed off-line and guarantees that the real trajectory of the closed-loop system will belong to a hyper-tube centered along the nominal trajectory. The volume of the hyper-tube depends on the upper bound of the disturbances, the bounds of the Jacobian matrix as well as Lipschitz constants of the UVMS dynamics. Under the assumption that the FHOCP is feasible at time $t = 0$, we guarantee the boundedness of the closed-loop system states.

The rest of this manuscript is structured as follows: Section \ref{sec:notation_preliminaries} provides the notation that will be used as well as necessary background knowledge; in Section \ref{sec:problem_formulation}, the problem treated in this paper is formally defined; Section \ref{sec:main_results} contains the main results of the paper; Section \ref{sec:simulation_results} is devoted to numerical simulations; and in Section \ref{sec:conclusions}, conclusions and future research directions are discussed.

\section{Notation and Preliminaries} \label{sec:notation_preliminaries}

Define by $\mathbb{N}$ and $\mathbb{R}$ the sets of positive integers and real numbers, respectively. Given the set $\mathcal{S}$, define by $S^n \coloneqq S \times \dots \times S$, its $n$-fold Cartesian product. Given vector $z \in \mathbb{R}^{n}$ define by $$\|z\|_{2} \coloneqq \sqrt{z^\top z}, \ \ \|z\|_{P} \coloneqq \sqrt{z^\top P z},$$ its Euclidean and weighted norm, with $P \ge 0$. Given vectors $z_1$, $z_2 \in \mathbb{R}^3$, $\mathcal{S}: \mathbb{R}^3 \to \mathfrak{so}(3)$ stands for the skew-symmetric matrix defined according to $\mathcal{S}(z_1) z_2 = z_1 \times z_2$ where $$\mathfrak{so}(3) \coloneqq \left\{\mathcal{S} \in \mathbb{R}^{3\times 3} : z^\top \mathcal{S}(\cdot) z = 0, \forall z \in \mathbb{R}^{3} \right\}.$$  $\lambda_{\scriptscriptstyle \min}(P)$ stands for the minimum absolute value of the real part of the eigenvalues of $P \in \mathbb{R}^{n \times n}$; $0_{m \times n} \in \mathbb{R}^{m \times n}$ and $I_n \in \mathbb{R}^{n \times n}$ stand for the $m \times n$ matrix with all entries zeros and the identity matrix, respectively. Given coordination frames $\Sigma_i$, $\Sigma_j$, denote by $R^j_i$ the transformation from $\Sigma_i$ to $\Sigma_j$. Given~sets~$\mathcal{S}_1$, $\mathcal{S}_2$~$\subseteq \mathbb{R}^n$, $\mathcal{S} \subseteq \mathbb{R}^{m}$~and~matrix $B \in \mathbb{R}^{n \times m}$,~the \emph{Minkowski addition}, the~\emph{Pontryagin~difference} and the \emph{matrix-set multiplication} are respectively defined by: 
\begin{align*}
\mathcal{S}_1 \oplus \mathcal{S}_2 & \coloneqq \{s_1 + s_2 : s_1 \in \mathcal{S}_1, s_2 \in \mathcal{S}_2\}, \\ 
\mathcal{S}_1 \ominus \mathcal{S}_2 & \coloneqq \{s_1 : s_1+s_2 \in \mathcal{S}_1, \forall s_2 \in \mathcal{S}_2\}, \\
B \circ \mathcal{S} & \coloneqq \{b:  b = Bs, s \in \mathcal{S} \}.
\end{align*}

\begin{lemma} \cite{alex_IJRNC_2018} \label{lemma:basic_ineq}
For any constant $\rho > 0$, vectors $z_1$, $z_2 \in \mathbb{R}^n$ and matrix $P \in \mathbb{R}^{n \times n}$, $P > 0$ it holds that $$z_1 P z_2 \le \tfrac{1}{4 \rho} z_1^\top P z_1 + \rho z_2^\top P z_2.$$
\end{lemma}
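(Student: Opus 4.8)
The plan is to reduce the claimed matrix‑weighted inequality to the elementary fact that a squared norm is nonnegative, after passing to the coordinates induced by the symmetric positive‑definite square root of $P$. Since $P > 0$ (understood symmetric), it admits a unique symmetric positive‑definite square root $P^{1/2}$ with $P^{1/2} P^{1/2} = P$. Setting $a \coloneqq P^{1/2} z_1$ and $b \coloneqq P^{1/2} z_2$, we have $z_1^\top P z_2 = a^\top b$, $z_1^\top P z_1 = \|a\|_2^2$ and $z_2^\top P z_2 = \|b\|_2^2$, so the statement becomes the scalar inequality $a^\top b \le \tfrac{1}{4\rho}\|a\|_2^2 + \rho\|b\|_2^2$.

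The key step is then to expand the nonnegative quantity $\bigl\| \tfrac{1}{2\sqrt{\rho}}\, a - \sqrt{\rho}\, b \bigr\|_2^2 \ge 0$, which yields $\tfrac{1}{4\rho}\|a\|_2^2 - a^\top b + \rho\|b\|_2^2 \ge 0$. Substituting back $a = P^{1/2} z_1$, $b = P^{1/2} z_2$ and rearranging gives precisely $z_1^\top P z_2 \le \tfrac{1}{4\rho} z_1^\top P z_1 + \rho z_2^\top P z_2$, as claimed.

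An essentially equivalent route avoids the square root altogether: apply the Cauchy–Schwarz inequality in the inner product $\langle z_1, z_2 \rangle_P \coloneqq z_1^\top P z_2$ to obtain $z_1^\top P z_2 \le \|z_1\|_P \|z_2\|_P$, and then apply the scalar Young inequality $x y \le \tfrac{x^2}{4\rho} + \rho y^2$ with $x = \|z_1\|_P \ge 0$ and $y = \|z_2\|_P \ge 0$.

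There is no genuine obstacle here: the only points that merit a word are that $P>0$ is read as symmetric positive definite (so that $P^{1/2}$ exists, equivalently $\langle\cdot,\cdot\rangle_P$ is a bona fide inner product) and that the left‑hand side $z_1 P z_2$ in the statement is understood as $z_1^\top P z_2$, the transpose being implicit. Everything else is completing the square, so the write‑up will be only a few lines.
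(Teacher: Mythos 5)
Your proof is correct. Note that the paper itself gives no proof of this lemma---it is imported verbatim from the cited reference \cite{alex_IJRNC_2018}---so there is nothing to compare against line by line; your argument supplies the standard justification. Both of your routes (expanding $\bigl\|\tfrac{1}{2\sqrt{\rho}}P^{1/2}z_1-\sqrt{\rho}\,P^{1/2}z_2\bigr\|_2^2\ge 0$, or Cauchy--Schwarz in the $P$-inner product followed by the scalar Young inequality $xy\le\tfrac{x^2}{4\rho}+\rho y^2$) are sound, and your two clarifying remarks are exactly the right ones: the statement's $z_1 P z_2$ must be read as $z_1^\top P z_2$, and $P>0$ must be read as symmetric positive definite for $P^{1/2}$ to exist and for $\langle\cdot,\cdot\rangle_P$ to be an inner product (this is consistent with how the paper uses $\|\cdot\|_P$ and with the only invocation of the lemma in the text, which takes $n=P=1$).
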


\begin{definition} \label{def:RPI_set} \cite{alex_IJRNC_2018}
Consider a dynamical system $\dot{\chi} = f(\chi,u,d)$ where: $\chi \in \mathcal{X}$, $u \in \mathcal{U}$, $d \in \mathcal{D}$ with initial condition $\chi(0) \in \mathcal{X}$. A set $\mathcal{X}' \subseteq \mathcal{X}$ is a \emph{Robust Control Invariant (RCI) set} for the system, if there exists a feedback control law $u \coloneqq \kappa(\chi) \in \mathcal{U}$, such that for all $\chi(0) \in \mathcal{X}'$ and for all $d \in \mathcal{D}$ it holds that $\chi(t) \in \mathcal{X}'$ for all $t \ge 0$, along every solution $\chi(t)$.
\end{definition}

\section{Problem Formulation} \label{sec:problem_formulation}

\subsection{Kinematic Model}

Consider a UVMS which is composed of an AUV and a $n$ Degree Of Freedom (DoF) manipulator mounted on the base of the vehicle. The AUV can be considered as a six DoF rigid body with position and orientation vector $\eta \coloneqq [x, y, z~\vline~\phi, \theta, \psi]^\top \in \mathbb{R}^6$, where the components of the vectors have been named according to SNAME  \cite{SNAME} as surge, sway, heave, roll, pitch and yaw respectively. The joint angular position state vector of the manipulator is defined by $q \coloneqq [ q_1,\dots,q_n]^\top \in \mathbb{R}^n$. Define by $\dot{q} \coloneqq [\dot{q}_1,\dots,\dot{q}_n]^\top \in \mathbb{R}^n$ the corresponding joint velocities.

In order to describe the motion of the combined system, the earth-fixed inertial frame $\Sigma_I$, the body-fixed frame $\Sigma_B$ and the end-effector fixed frame $\Sigma_E$ are introduced (see Fig. \ref{fig:uvms_frames}). Moreover, without loss of generality, the reference frame $\Sigma_0$ is chosen to be located at the manipulator's base, and the frames $\Sigma_1, \ldots, \Sigma_n$ are located to the $1$-st$,\ldots,n$-th link of the manipulator, respectively, under the Denavit-Hartenberg convention \cite{sciavicco2012modelling}. The translational and rotational kinematic equations for the AUV system (see \cite{antonelli}) are given by:
\begin{subequations} \label{eq:kin}
\begin{align}
	\dot{\eta}
	& =
	\begin{bmatrix}
	\dot{\eta}_1 \\
	\dot{\eta}_2 \\
	\end{bmatrix}
	= \mathfrak{J}(\eta_2)
	\begin{bmatrix}
	\nu_1 \\
	\nu_2 \\
	\end{bmatrix}, \\
	\mathfrak{J}(\eta_2) & \coloneqq 
	\begin{bmatrix}
	\mathfrak{J}_1(\eta_2) & 0_{3 \times 3} \\
	0_{3 \times 3} & \mathfrak{J}_{2}(\eta_2) \\
	\end{bmatrix}, \\
	\mathfrak{J}_1(\eta_2) & \coloneqq
	\begin{bmatrix}
	c_{\theta} c_{\psi} & s_{\phi} s_{\theta} c_{\psi}-s_{\psi} c_{\phi} &
	s_{\theta} c_{\phi} c_{\psi}+s_{\phi} s_{\psi} \\
	s_{\psi} c_{\theta} & s_{\phi} s_{\theta} s_{\psi}+c_{\phi} c_{\psi} &
	s_{\theta} s_{\psi} c_{\phi}-s_{\phi} c_{\psi} \\
	-s_{\theta} & s_{\phi} c_{\theta} & c_{\phi} c_{\theta} \\
	\end{bmatrix}, \\
	\mathfrak{J}_2(\eta_2) & \coloneqq
	\begin{bmatrix}
	1 & \tfrac{s_{\phi} s_{\theta}}{c_{\theta}} & \tfrac{c_{\phi} s_{\theta}}{c_{\theta}} \\
	0 & c_{\phi} & -s_{\phi} \\
	0 & \tfrac{s_{\phi}}{c_{\theta}} & \tfrac{c_{\phi}}{c_{\theta}} \\
	\end{bmatrix},
\end{align}
\end{subequations}
where $\eta_{1} \coloneqq \left[ x, y, z \right]^{\tau} \in \mathbb{R}^3$, $\eta_{2} \coloneqq \left[\phi, \theta, \psi \right]^\top \in \mathbb{R}^3 $ denote the position vector and the orientation vector of the frame $\Sigma_B$ relative to the frame $\Sigma_I$, respectively; $ \nu_{1}$, $\nu_{2} \in \mathbb{R}^3 $ denote the linear and the angular velocity of the frame $\Sigma_B$ with respect to $\Sigma_I$ respectively; $\mathfrak{J}(\eta_2) \in \mathbb{R}^{6 \times 6}$ stands for the Jacobian matrix transforming the velocities from $\Sigma_B$ to $\Sigma_I$; $\mathfrak{J}_{1}(\eta_2)$, $\mathfrak{J}_2(\eta_2) \in \mathbb{R}^{3 \times 3}$ are the corresponding parts of the Jacobian related to position and orientation, respectively; The notation $s_{\varsigma}$ and $c_{\varsigma}$ stand for the trigonometric functions $\sin(\varsigma)$ and $\cos(\varsigma)$ of an angle $\varsigma \in \mathbb{R}$, respectively.

\begin{figure}[t!]
\centering
\includegraphics[scale=0.45]{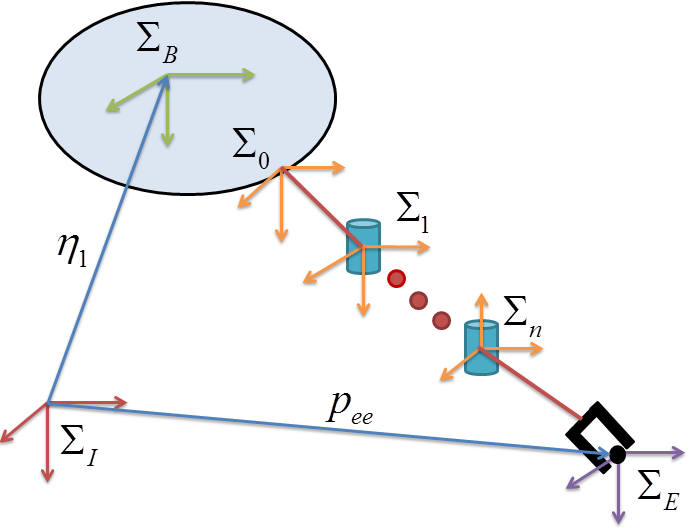}
\caption{An AUV Equipped with a n DoF manipulator}
\label{fig:uvms_frames}
\end{figure}

\noindent Denote by $$\mathfrak{q} \coloneqq \left[\eta_1^\top, \eta_2^\top, q^\top \right]^\top\in\mathbb{R}^{6+n},$$ the pose configuration vector of the UVMS. Let $\mathfrak{p}$, $\mathfrak{o} \in \mathbb{R}^3$ be the position and orientation vectors of the end-effector with reference to the frame $\Sigma_I$, respectively. The vectors $\mathfrak{p}$, $\mathfrak{o}$ depend on the pose $\mathfrak{q}$ and they can be obtained by the the following homogeneous transformation:
\begin{equation}
\mathfrak{T}(\mathfrak{q}) \coloneqq
\begin{bmatrix}
R_E^{I}(\mathfrak{q}) & \mathfrak{p}(\mathfrak{q}) \\
0_{1 \times 3} & 1 \\
\end{bmatrix}
=
T_B^I T_0^B T_1^0 \cdots T_n^{n-1} T_E^n, \label{eq:forw_kinematics}
\end{equation}
where: $T^j_i$ is the homogeneous transformation matrix describing the position and orientation of frame $\Sigma_i$ with reference to the frame $\Sigma_j$ with $i$, $j \in \{1,\dots, n, I, 0, B, E\}$. The end-effector linear velocity $\dot{\mathfrak{p}} \in \mathbb{R}^3$ and the time derivative or Euler angles $\dot{\mathfrak{o}} \in \mathbb{R}^3$ are related to the body-fixed velocities $\nu_1$, $\nu_2$ and $\dot{q}$ with the following \emph{kinematics model:}
\begin{equation} \label{eq:kinematics}
\dot{\chi}
=
J(\mathfrak{q}) \zeta, 
\end{equation}
where $$\chi \coloneqq [\mathfrak{p}^\top, \mathfrak{o}^\top]^\top \in \mathbb{R}^{6}, \ \ \zeta \coloneqq \left[ \nu_1^\top, \nu_2^\top, \dot{q}^\top \right]^\top \in \mathbb{R}^{{6+n}},$$ is the body-fixed system velocity vector. The Jacobian transformations matrices $$J(\mathfrak{q}) \in \mathbb{R}^{6 \times (6+n)}, \ \ J_{\rm pos}(\mathfrak{q}) \in \mathbb{R}^{3 \times (6 +n)}, \ \ J_{\rm{or}}(\mathfrak{q}) \in \mathbb{R}^{3 \times (6 +n)},$$ are respectively defined by:
\begin{align*} 
J(\mathfrak{q})  &\coloneqq 	\begin{bmatrix}
J_{\rm pos}(\mathfrak{q}) \\
J_{\rm or}(\mathfrak{q}) \\
\end{bmatrix}, \\
J_{\rm pos}(\mathfrak{q}) &\coloneqq 
\left[\mathfrak{J}_1(\eta_2)~\vline~- \mathfrak{J}_1(\eta_2) \mathcal{S}(p_{ee})~\vline~R_0^{I} J_{e,1} \right], \\
J_{\rm or}(\mathfrak{q}) &\coloneqq 
\left[0_{3 \times 3}~\vline~\mathfrak{J}_2(\mathfrak{o}) R_B^E~\vline~\mathfrak{J}_2(\mathfrak{o}) R_0^{E} J_{e,2} \right].
\end{align*}
In the latter, the vector $p_{ee} \in \mathbb{R}^{3}$ is the local position of the end-effector with reference to the frame $\Sigma_B$; the matrices  $J_{e,1}$, $J_{e,2} \in \mathbb{R}^{3 \times n}$ represent the manipulator Jacobian matrices with respect to the frame $\Sigma_0$; and $\mathcal{S}(\cdot)$ the skew-symmetric matrix as given in Section \ref{sec:notation_preliminaries}. For the aforementioned transformations we refer to \cite{sciavicco2012modelling}.

\subsection{Dynamic Model}
When the end-effector of the robotic system is in contact with the environment, the force at the tip of the manipulator acts on the whole system according to the following uncertain nonlinear dynamics:
\begin{align} \label{eq:dynamics}
\dot{\zeta} = f(\chi, \zeta)+ \mathfrak{u} + d(\mathfrak{q}, \zeta, t),
\end{align}
where:
\begin{align}
& \hspace{-4mm} f(\chi, \zeta) \coloneqq \notag \\
& \hspace{-4mm}  - M(\mathfrak{q})^{-1} \Big\{ C(\zeta, \mathfrak{q}) \zeta + D(\zeta, \mathfrak{q}) \zeta +g(\mathfrak{q}) + J^{\top}(\mathfrak{q}) \mathfrak{F}(\chi) \Big\}, \hspace{-4mm} \label{eq:func_h}
\end{align}
where $M(\mathfrak{q}) \in \mathbb{R}^{({6+n}) \times ({6+n})}$ is the inertia matrix for which it holds that: $z^\top M(\mathfrak{q}) z > 0$, $\forall z \in \mathbb{R}^{6+n}$; $C(\zeta, \mathfrak{q}) \in \mathbb{R}^{({6+n}) \times ({6+n})}$ is the matrix of Coriolis and centripetal terms; $D(\zeta, \mathfrak{q}) \in \mathbb{R}^{({6+n}) \times ({6+n})}$ is the matrix of dissipative effects; $d(\mathfrak{q}, \zeta, t) \in \mathbb{R}^{6+n}$ is a vector that models the external disturbances, uncertainties and unmodeled dynamics of the system; $g(\mathfrak{q}) \in \mathbb{R}^{({6+n})}$ is the vector of gravity and buoyancy effects; $\mathfrak{u} \in \mathbb{R}^{6+n}$ denotes the vector of the propulsion forces and moments acting on the vehicle in the frame $\Sigma_{B}$ as well as the joint torques; $\mathfrak{F}(\chi) \in \mathbb{R}^{6}$ is the vector of interaction forces and torques exerted by the end-effector towards the environment expressed in $\Sigma_I$.

In this paper, an interaction between the end-effector and a frictionless, elastically compliant surface is assumed. Then, according to \cite{siciliano_force_control}, the vector of interaction forces and torques that is exerted by the end-effector can be written as:
\begin{equation} \label{eq:force_F}
\mathfrak{F}(\chi) \coloneqq K (\chi - \chi_{\scriptscriptstyle \rm eq}),
\end{equation}
where $K \in \mathbb{R}^{6 \times 6}$, $K > 0$ stands for the stiffness matrix, which represents elastic coefficient of the environment, and $\chi_{\scriptscriptstyle \rm eq} \in \mathbb{R}^{6}$ is the given constant vector of the equilibrium position/orientation of the undeformed environment.

We also consider that the UVMS is in the presence of state and input constraints given by $\mathfrak{q} \in \mathcal{Q}$, $\zeta \in \mathcal{Z}$, $u \in \mathcal{U}$,
where $\mathcal{Q} \subseteq \mathbb{R}^{6+n}$, $\mathcal{Z} \subseteq \mathbb{R}^{6+n}$ and $\mathcal{U} \subseteq \mathbb{R}^{6+n}$ are \emph{connected sets containing the origin}. For certain technical reasons that will be presented thereafter, the constraints imposed to the configuration states $\mathfrak{q}$ are given by:
\begin{align} \label{eq:set_Q}
\mathcal{Q} \coloneqq \Big\{ \mathfrak{q} \in \mathbb{R}^{6+n} : & \ \lambda_{\scriptscriptstyle \min}\left[ \tfrac{J^{+}(\mathfrak{q})+J^{+}(\mathfrak{q})^\top}{2} \right] \ge \underline{J}, \notag \\
&  \ \ \|J(\mathfrak{q})\|_2 \le \overline{J}, \ \|\dot{J}(\mathfrak{q})\|_{2} \le \widetilde{J} \Big\},
\end{align}
where $J^{+}(\mathfrak{q}) \coloneqq J(\mathfrak{q}) J(\mathfrak{q})^\top$ and $\underline{J}$, $\overline{J}$, $\widetilde{J} > 0$. According to \eqref{eq:forw_kinematics}, the constraints $\mathfrak{q} \in \mathcal{Q}$ impose also constraints on the vector $\chi \in \mathcal{X} \subseteq \mathbb{R}^{6}$, where the set $\mathcal{X}$ can be computed by the transformation $\mathfrak{T}(\mathfrak{q})$, as given in \eqref{eq:forw_kinematics}. Note also that the function $f$ given in \eqref{eq:func_h} is continuously differentiable in the set $\mathcal{Q} \times \mathcal{X} \times \mathcal{Z}$. Furthermore, assume bounded disturbances $d \in \mathcal{D}$ where:
$\mathcal{D}$ $\coloneqq \big\{d \in \mathbb{R}^{6+n}:$ $\|d(\mathfrak{q}, \zeta, t)\|_{2} \le \widetilde{d}$, $\forall (\mathfrak{q}$, $\zeta) \in \mathcal{Q}$ $\times \mathcal{Z}\big\}$, where $\widetilde{d} > 0$. 

For the kinematics/dynamics \eqref{eq:kinematics},\eqref{eq:dynamics}, define the corresponding \emph{nominal kinematics/dynamics} by:
\begin{subequations}
\begin{align}
\dot{\overline{\chi}} & = J(\overline{\mathfrak{q}}) \overline{\zeta}, \label{eq:nom_kinematics} \\
 \dot{\overline{\zeta}} & = f(\overline{\chi}, \overline{\zeta})+ \overline{u}, \label{eq:nom_dynamics}
\end{align}
\end{subequations}
where $d(\cdot) \equiv 0$, $\overline{\mathfrak{q}} \in \mathcal{Q}$, $\overline{\chi} \in \mathcal{X}$, $\overline{\zeta} \in \mathcal{Z}$ and $\overline{u} \in \mathcal{U}$. Define the stack vector $\overline{\xi} \coloneqq [\overline{\chi}, \overline{\zeta}]^\top \in \mathbb{R}^{12+n}$ and consider the linear nominal system $\dot{\overline{\xi}} = A \overline{\xi} + B \overline{u}, \ \ A  \in \mathbb{R}^{(12+n) \times (12+n)}, \ \ B  \in \mathbb{R}^{(12+n) \times (6+n)}$,
which is the outcome of the Jacobian linearization of the nominal dynamics \eqref{eq:nom_kinematics},\eqref{eq:nom_dynamics} around the equilibrium point $\xi = 0$. Due to the dimension of the control input ($6+n > 6$), the stabilization of the state $\overline{\chi}$ to the desired state $\chi_{\scriptscriptstyle \rm des}$ can be achieved. Therefore, the linear system is stabilizable.

\subsection{Problem Statement}

\begin{problem} \label{problem}
Consider a UVMS composed of an AUV and an attached manipulator with $n$ DoF, which is in contact with a surface of a compliant environment. The UVMS is governed by the kinematics and dynamics models given in \eqref{eq:kinematics} and \eqref{eq:dynamics}, respectively. The system is in the presence of state and input constraints as well as bounded disturbances which are respectively given by:
\begin{align} \label{eq:constr}
\mathfrak{q} \in \mathcal{Q}, \ \chi \in \mathcal{X}, \ \zeta \in \mathcal{Z}, \ \mathfrak{u} \in \mathcal{U}, \ d \in \mathcal{D}.
\end{align}
Given a vector $\mathfrak{F}_{\scriptscriptstyle \rm des} \in \mathbb{R}^{6}$ that satisfies \eqref{eq:force_F} and stands for the desired force/torque vector  that the end-effector is required to exert towards a surface of the environment, design a \emph{feedback control law} $\mathfrak{u} \coloneqq \kappa(\chi, \zeta)$ such that $\lim\limits_{t \to \infty} \|\mathfrak{F}(\chi(t))-\mathfrak{F}_{\scriptscriptstyle \rm des}\|_{2} \to 0$, while all the constraints given in \eqref{eq:constr} are satisfied.
\end{problem}

\section{Main Results} \label{sec:main_results}

In this section, we propose a novel feedback control law that solves Problem \ref{problem} in a systematic way. Due to the fact that it is required to design a feedback control law that guarantees the minimization of the term $\|\mathfrak{F}(t)-\mathfrak{F}_{\scriptscriptstyle \rm des}\|_{2}$, as $t \to \infty$, under state and input constraints given by \eqref{eq:constr}, we utilize a Nonlinear Model Predictive Control (NMPC) framework \cite{michalska_1993, frank_1998_quasi_infinite, mayne_2000_nmpc}. Furthermore, since the UVMS is under the presence of disturbances/uncertainties $d \in \mathcal{D}$, we provide a robust analysis, the so-called tube-based robust NMPC approach \cite{yu_2013_tube, alex_IJRNC_2018}. In particular, first, the error states and the corresponding transformed constraints sets are defined in Section \ref{sec:error_constr}. Then, the proposed feedback control law consists of two parts: an on-line control law which is the outcome of a solution to a Finite Horizon Optimal Control Problem (FHOCP) for the nominal system dynamics (see Section \ref{sec:optimal_contol}); and a state feedback law which is designed off-line and guarantees that the real system trajectories always lie within a hyper-tube centered along the nominal trajectories (see \ref{sec:state_feedback_law}).

\subsection{Errors and Constraints} \label{sec:error_constr}

According to \eqref{eq:force_F}, for the error between the actual $\mathfrak{F}$ and the desired $\mathfrak{F}_{\scriptscriptstyle \rm des}$ forces/torques exerted from the end-effector to the surface it holds that: $\mathfrak{F}-\mathfrak{F}_{\scriptscriptstyle \rm des}$ $= K (\chi - \chi_{\scriptscriptstyle \rm eq})$ $-K (\chi_{\scriptscriptstyle \rm des}$ $- \chi_{\scriptscriptstyle \rm eq})$ $= K (\chi-\chi_{\scriptscriptstyle \rm des})$,
where $\chi_{\scriptscriptstyle \rm des} \coloneqq K^{-1} \mathfrak{F}_{\scriptscriptstyle \rm des} + \chi_{\scriptscriptstyle \rm eq} \in \mathbb{R}^{6}$. The latter implies that if we design a feedback control law $u = \kappa(\chi, \zeta)$ which guarantees that $\lim\limits_{t \to \infty} \|\chi(t)-\chi_{\scriptscriptstyle \rm des}\|_{2} \to 0$, while all the constraints given in \eqref{eq:constr} are satisfied, Problem \ref{problem} will have been solved.

Define the error state $e \coloneqq \chi-\chi_{\scriptscriptstyle \rm des} \in \mathbb{R}^{6}$. Then, the \emph{uncertain error kinematics/dynamics} are given by:
\begin{subequations}
\begin{align}
\dot{e} & = J(\mathfrak{q}) \zeta, \label{eq:unsrt_error_kin} \\
\dot{\zeta} & = f(e+\chi_{\scriptscriptstyle \rm des}, \zeta)+ \mathfrak{u} + d(\mathfrak{q}, \zeta, t), \label{eq:unsrt_error_dyn}
\end{align}
\end{subequations}
and the corresponding \emph{nominal error kinematics/dynamics} by:
\begin{subequations}
\begin{align}
\dot{\overline{e}} & = J(\overline{\mathfrak{q}}) \overline{\zeta}, \label{eq:nom_error_kin} \\
\dot{\overline{\zeta}} & = f(\overline{e}+\chi_{\scriptscriptstyle \rm des}, \overline{\zeta})+ \overline{u}, \label{eq:nom_error_dyn}
\end{align}
\end{subequations}
In order to translate the constraints for the state $\chi \in \mathcal{X}$ to constraints that are dictated regarding the error $e$, the constraints set $\mathcal{E} \coloneqq \{e \in \mathbb{R}^{6}: e \in \mathcal{X} \oplus (-\chi_{\scriptscriptstyle \rm des}) \}$ is introduced.

\subsection{Feedback Control Design} \label{sec:state_feedback_law}

\noindent Consider the feedback law:
\begin{equation} \label{eq:control_law_u}
\mathfrak{u} \coloneqq \overline{u}(\overline{e}, \overline{\zeta}) + \kappa(e, \zeta, \overline{e}, \overline{\zeta}),
\end{equation}
which consists of a nominal control law $\overline{u}(\overline{e}, \overline{\zeta}) \in \mathcal{U}$ and a state feedback law $\kappa(\cdot)$. The control action $\overline{u}(\overline{e}, \overline{\zeta})$ will be the outcome of a FHOCP for the nominal kinematics/dynamics \eqref{eq:nom_error_kin},\eqref{eq:nom_error_dyn} which is solved on-line at each sampling time. The state feedback law $\kappa(\cdot)$ is used to guarantee that the real trajectories $e(t)$, $\zeta(t)$, which are the solution to \eqref{eq:unsrt_error_kin},\eqref{eq:unsrt_error_dyn}, always remain within a bounded hyper-tube centered along the nominal trajectories $\overline{e}(t)$,~$\overline{\zeta}(t)$ which are~the~solution~ to~\eqref{eq:nom_error_kin},\eqref{eq:nom_error_dyn}.

Define by $\mathfrak{e} \coloneqq e - \overline{e} \in \mathbb{R}^{6}$ and $\mathfrak{z} \coloneqq \zeta - \overline{\zeta} \in \mathbb{R}^{6+n}$ the deviation between the real states of the uncertain system \eqref{eq:unsrt_error_kin},\eqref{eq:unsrt_error_dyn} and the states of the nominal system \eqref{eq:nom_error_kin},\eqref{eq:nom_error_dyn}, respectively, with $\mathfrak{e}(0) = \mathfrak{z}(0) = 0$. It will be proved hereafter that the trajectories $\mathfrak{e}(t)$, $\mathfrak{z}(t)$ remain invariant in compact sets. The dynamics of the states $\mathfrak{e}$, $\mathfrak{z}$ are written as:
\begin{subequations}
\begin{align}
\dot{\mathfrak{e}} &  = \mathfrak{b}(\chi, \overline{\chi}, \zeta) + J(\overline{\mathfrak{q}}) \mathfrak{z}, \label{eq:frak_e} \\
\dot{\mathfrak{z}} & = \mathfrak{l}(e, \overline{e}, \zeta, \overline{\zeta})+(\mathfrak{u}-\overline{u}) + d(\mathfrak{q}, \zeta, t), \label{eq:frak_z}
\end{align}
\end{subequations}
where the functions $\mathfrak{b}$, $\mathfrak{l}$ are defined by: $\mathfrak{b}(\chi, \overline{\chi}, \zeta) \coloneqq \mathfrak{c}(\chi, \zeta)-\mathfrak{c}(\overline{\chi}, \zeta)$, $\mathfrak{l}(e, \overline{e}, \zeta, \overline{\zeta}) \coloneqq  f(e+\chi_{\scriptscriptstyle \rm des}, \zeta)-f(\overline{e}+\chi_{\scriptscriptstyle \rm des}, \overline{\zeta})$, with $\mathfrak{c}(\chi, \zeta) \coloneqq J(\mathfrak{q}) \zeta$. Since the aforementioned functions are continuously differentiable, the following hold:
\begin{align*}
\|\mathfrak{b}(\cdot)\|_2 & = \|\mathfrak{c}(\chi, \zeta)-\mathfrak{c}(\overline{\chi}, \zeta)\|_2 \le L_{\scriptscriptstyle \mathfrak{c}} \|\chi-\overline{\chi}\|_2 = L_{\scriptscriptstyle \mathfrak{c}} \|\mathfrak{e}\|_2,  \\
\|\mathfrak{l}(\cdot)\|_{2} & \le \|f(e+\chi_{\scriptscriptstyle \rm des}, \zeta)- f(\overline{e}+\chi_{\scriptscriptstyle \rm des}, \zeta)\|_{2} \notag \\
&\hspace{12mm}+\|f(\overline{e}+\chi_{\scriptscriptstyle \rm des}, \zeta)- f(\overline{e}+\chi_{\scriptscriptstyle \rm des}, \overline{\zeta})\|_2 \notag \\
& \le L_1\|e-\overline{e}\|_{2} + L_2 \|\zeta-\overline{\zeta}\|_{2} \le L \left( \|\mathfrak{e}\|_{2} + \|\mathfrak{z}\|_{2} \right).
\end{align*}
The constant $L_{\scriptscriptstyle \mathfrak{c}}$ stands for the Lipschitz constant of function $\mathfrak{c}$ with respect to the variable $\chi$; $L_1$, $L_2$ stand for the Lipschitz constants of function $h$ with respect to the variables $\chi$ and $\zeta$, respectively, and $L \coloneqq \max\{L_1, L_2\}$.

\begin{lemma} \label{lemma:tube}
The state feedback law designed by:
\begin{equation} \label{eq:kappa_law}
\kappa(e, \overline{e}, \zeta, \overline{\zeta}) \coloneqq - k (e-\overline{e})-k \sigma J(\overline{q})^\top (\zeta -\overline{\zeta}),
\end{equation}
where $k$, $\sigma > 0$ are chosen such that the following hold:
\begin{subequations}
\begin{align}
\underline{\sigma} & > 0,\ \ \sigma \coloneqq \frac{L_{\scriptscriptstyle \mathfrak{c}}+\underline{\sigma}}{\underline{J}},
\ \ \rho > \tfrac{\Lambda_1}{4 \underline{\sigma}}, \ \ k > \rho \Lambda_1 + \Lambda_2, \label{eq:sigma_under_sigma} \\
\Lambda_1 & \coloneqq \left[L + \overline{J}+ \sigma \left( L_{\scriptscriptstyle \mathfrak{c}} + \widetilde{J}\right) \right], \Lambda_2 \coloneqq \left(L + \sigma \overline{J}^2\right), \label{eq:Lambda_1}
\end{align}
\end{subequations}
renders the sets:
\begin{subequations}
\begin{align}
\Omega_1 & \coloneqq \left\{\mathfrak{e} \in \mathbb{R}^{6} : \|\mathfrak{e}\|_2 \le \tfrac{\widetilde{d}}{\min\{\alpha_1, \alpha_2\}} \right\}, \label{eq:omega_1} \\
\Omega_2 & \coloneqq \left\{\mathfrak{z} \in \mathbb{R}^{6+n} : \|\mathfrak{z}\|_2 \le \tfrac{2 \widetilde{d}}{\overline{J} \min\{\alpha_1, \alpha_2\}} \right\}, \label{eq:omega_2}
\end{align}
\end{subequations}
RCI sets for the error dynamics \eqref{eq:frak_e}, \eqref{eq:frak_z}, according to Definition \ref{def:RPI_set}. The constants $\alpha_1$, $\alpha_2 > 0$ are defined by:
\begin{equation}
\alpha_1 \coloneqq \underline{\sigma}- \tfrac{\Lambda_1}{4 \rho}, \ \alpha_2 \coloneqq k-\rho \Lambda_1-\Lambda_2. \label{eq:a_1_a_2}
\end{equation}
\end{lemma}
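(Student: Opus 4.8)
The plan is to prove that the product set $\Omega_1\times\Omega_2$ is robust control invariant for the coupled deviation dynamics \eqref{eq:frak_e},\eqref{eq:frak_z} under the feedback \eqref{eq:kappa_law} --- which is the content of the claim --- by exhibiting a strict Lyapunov-type decrease outside a region contained in $\Omega_1\times\Omega_2$. First I would substitute \eqref{eq:kappa_law} into \eqref{eq:frak_z}, i.e. set $\mathfrak{u}-\overline{u}=\kappa=-k\mathfrak{z}-k\sigma J(\overline{\mathfrak{q}})^\top\mathfrak{e}$, obtaining the closed loop
\begin{align*}
\dot{\mathfrak{e}} &= \mathfrak{b}(\chi,\overline{\chi},\zeta)+J(\overline{\mathfrak{q}})\mathfrak{z},\\
\dot{\mathfrak{z}} &= \mathfrak{l}(e,\overline{e},\zeta,\overline{\zeta})-k\mathfrak{z}-k\sigma J(\overline{\mathfrak{q}})^\top\mathfrak{e}+d(\mathfrak{q},\zeta,t).
\end{align*}
The structural observation is that damping for $\mathfrak{e}$ must be supplied through the cross term $J(\overline{\mathfrak{q}})\mathfrak{z}$: introducing $s\coloneqq\mathfrak{z}+\sigma J(\overline{\mathfrak{q}})^\top\mathfrak{e}$ rewrites the first equation as $\dot{\mathfrak{e}}=\mathfrak{b}-\sigma J(\overline{\mathfrak{q}})J(\overline{\mathfrak{q}})^\top\mathfrak{e}+J(\overline{\mathfrak{q}})s$ and $\kappa=-ks$, so that the coercivity bound $\lambda_{\scriptscriptstyle \min}\!\big[\tfrac{J^{+}(\overline{\mathfrak{q}})+J^{+}(\overline{\mathfrak{q}})^\top}{2}\big]\ge\underline{J}$ of \eqref{eq:set_Q} gives $\mathfrak{e}^\top J(\overline{\mathfrak{q}})J(\overline{\mathfrak{q}})^\top\mathfrak{e}\ge\underline{J}\|\mathfrak{e}\|_2^2$, and combining with $\|\mathfrak{b}\|_2\le L_{\scriptscriptstyle \mathfrak{c}}\|\mathfrak{e}\|_2$ leaves an effective negative rate $-(\sigma\underline{J}-L_{\scriptscriptstyle \mathfrak{c}})\|\mathfrak{e}\|_2^2=-\underline{\sigma}\|\mathfrak{e}\|_2^2$. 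This is exactly why $\sigma\coloneqq(L_{\scriptscriptstyle \mathfrak{c}}+\underline{\sigma})/\underline{J}$ is chosen as in \eqref{eq:sigma_under_sigma}.

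Second, I would take the candidate $V(\mathfrak{e},\mathfrak{z})\coloneqq\tfrac12\|\mathfrak{e}\|_2^2+\tfrac12\|\mathfrak{z}\|_2^2$ (equivalently $\tfrac12\|\mathfrak{e}\|_2^2+\tfrac12\|s\|_2^2$), differentiate it along the closed loop, and bound the resulting terms using: the Lipschitz estimates $\|\mathfrak{b}\|_2\le L_{\scriptscriptstyle \mathfrak{c}}\|\mathfrak{e}\|_2$ and $\|\mathfrak{l}\|_2\le L(\|\mathfrak{e}\|_2+\|\mathfrak{z}\|_2)$ established just above the lemma; the bounds $\|J(\overline{\mathfrak{q}})\|_2\le\overline{J}$ and $\|\dot{J}(\overline{\mathfrak{q}})\|_2\le\widetilde{J}$, valid because $\overline{\mathfrak{q}}\in\mathcal{Q}$ along the nominal trajectory; the coercivity bound $\ge\underline{J}$; and $\|d\|_2\le\widetilde{d}$. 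All the sign-indefinite cross terms (of the form $\mathfrak{e}^\top J(\overline{\mathfrak{q}})\mathfrak{z}$, $s^\top\dot{J}(\overline{\mathfrak{q}})^\top\mathfrak{e}$, $s^\top J(\overline{\mathfrak{q}})J(\overline{\mathfrak{q}})^\top s$, etc.) would be split by Lemma \ref{lemma:basic_ineq} with the parameter $\rho$; after collecting, their coefficients coalesce into precisely the constants $\Lambda_1$, $\Lambda_2$ of \eqref{eq:Lambda_1}, and one arrives at an estimate of the form
\begin{equation*}
\dot{V}\le-\alpha_1\|\mathfrak{e}\|_2^2-\alpha_2\|\mathfrak{z}\|_2^2+\widetilde{d}\,\|\mathfrak{z}\|_2,
\end{equation*}
with $\alpha_1=\underline{\sigma}-\tfrac{\Lambda_1}{4\rho}$ and $\alpha_2=k-\rho\Lambda_1-\Lambda_2$ as in \eqref{eq:a_1_a_2}, which are strictly positive exactly under $\rho>\tfrac{\Lambda_1}{4\underline{\sigma}}$ and $k>\rho\Lambda_1+\Lambda_2$ from \eqref{eq:sigma_under_sigma}.

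Third, I would turn this dissipation inequality into invariance. Since $\mathfrak{e}(0)=\mathfrak{z}(0)=0$, the trajectory starts at $V=0$; with $\mu\coloneqq\min\{\alpha_1,\alpha_2\}$ the inequality forces $V$ to strictly decrease whenever $\|\mathfrak{e}\|_2$ or $\|\mathfrak{z}\|_2$ exceeds the respective threshold in \eqref{eq:omega_1},\eqref{eq:omega_2}, so the deviation never leaves $\Omega_1\times\Omega_2$ for any admissible $d\in\mathcal{D}$ --- the RCI property of Definition \ref{def:RPI_set}, once one also checks that $\kappa(\cdot)$ is a valid state feedback in the sense of that definition. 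Extracting the two \emph{distinct} radii $\widetilde{d}/\mu$ and $2\widetilde{d}/(\overline{J}\mu)$ (rather than reading both off a single ball $\{V\le\cdot\}$) will require additionally using the scalar sub-inequality $\tfrac{1}{2}\frac{d}{dt}\|\mathfrak{z}\|_2^2\le-\alpha_2\|\mathfrak{z}\|_2^2+(\text{$\mathfrak{e}$-coupling})+\widetilde{d}\|\mathfrak{z}\|_2$ together with the already-established bound on $\|\mathfrak{e}\|_2$, so that the $\mathfrak{z}$-threshold is recovered on $\partial\Omega_2$.

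I expect the main obstacle to be the second step. There is no conceptual difficulty once the $\mathfrak{e}$--$\mathfrak{z}$ coupling has been recast through $s$, but the derivative of $V$ generates a fairly large number of cross terms, and each of them must be allocated between the two negative quadratics via the single free parameter $\rho$ so that the leftover coefficients assemble into exactly $\Lambda_1$, $\Lambda_2$ and the thresholds of $\Omega_1$, $\Omega_2$. Keeping track of where $\overline{J}$, $\widetilde{J}$, $\underline{J}$, $L$, $L_{\scriptscriptstyle \mathfrak{c}}$ enter --- in particular that $\sigma$ has been tuned so that the $\underline{J}$-coercivity term exactly cancels the Lipschitz growth of $\mathfrak{b}$ --- is the crux of the bookkeeping, the rest being routine.
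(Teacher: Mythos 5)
Your proposal follows essentially the same route as the paper: a backstepping argument with the auxiliary variable $s=\mathfrak{z}+\sigma J(\overline{\mathfrak{q}})^\top\mathfrak{e}$ (the paper's $\mathfrak{r}$), the composite Lyapunov function $\tfrac12\|\mathfrak{e}\|_2^2+\tfrac12\|s\|_2^2$, Lemma \ref{lemma:basic_ineq} with parameter $\rho$ to absorb the cross terms into $\Lambda_1,\Lambda_2$, and the zero initial deviation $\mathfrak{e}(0)=\mathfrak{z}(0)=0$ to conclude invariance. The only caveat is that the clean dissipation inequality holds with $\|s\|_2$ rather than $\|\mathfrak{z}\|_2$ in the damping and disturbance terms, and the $\Omega_2$ radius must then be recovered from the bound on $\|[\mathfrak{e}^\top,s^\top]^\top\|_2$ --- a step you correctly flag and which the paper itself handles only loosely.
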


\noindent \textbf{Proof :} A backstepping control methodology will be used \cite{krstic1995nonlinear}. The state $\mathfrak{z}$ in \eqref{eq:frak_e} can be seen as virtual input to be designed such that the Lyapunov function $\mathfrak{L}_1(\mathfrak{e}) \coloneqq \frac{1}{2} \|\mathfrak{e}\|^2_2$ for the system \eqref{eq:frak_e} is always decreasing. The time derivative of $\mathfrak{L}_1$ along the trajectories of system \eqref{eq:frak_e} is given by:
\begin{align}
\dot{\mathfrak{L}}(\mathfrak{e}) & = \mathfrak{e}^\top J(\overline{\mathfrak{q}}) \mathfrak{z} + \mathfrak{e}^\top \mathfrak{b}(\cdot) \le \mathfrak{e}^\top J(\overline{\mathfrak{q}}) \mathfrak{z} + L_{\scriptscriptstyle \mathfrak{c}} \|\mathfrak{e}\|_2^2. \label{eq:lyap1}
\end{align}
Design the virtual control input as $\mathfrak{z} \equiv - \sigma J(\overline{\mathfrak{q}})^\top \mathfrak{e}$, with $\underline{J}$, $\sigma$ as given in \eqref{eq:set_Q}, \eqref{eq:sigma_under_sigma}, respectively. Then, by employing \eqref{eq:set_Q}, \eqref{eq:lyap1} becomes:
\begin{align}
\dot{\mathfrak{L}}(\mathfrak{e}) & \le - \sigma \mathfrak{e}^\top J^{+}(\overline{\mathfrak{q}}) \mathfrak{e} + L_{\scriptscriptstyle \mathfrak{c}} \|\mathfrak{e}\|_2^2 \notag \\
& \le - \sigma \lambda_{\min} \left[\tfrac{J^{+}(\overline{\mathfrak{q}})+J^{+}(\overline{\mathfrak{q}})^\top}{2}\right] \|\mathfrak{e}\|_{2}^2 + L_{\scriptscriptstyle \mathfrak{c}} \|\mathfrak{e}\|_2^2 \notag \\
& \le - \sigma \underline{J} \|\mathfrak{e}\|_{2}^2 + L_{\scriptscriptstyle \mathfrak{c}} \|\mathfrak{e}\|_2^2 = - \underline{\sigma} \|\mathfrak{e}\|_{2}^{2}. \label{eq:lyap11}
\end{align}
Define the backstepping auxiliary error state $\mathfrak{r} \coloneqq \mathfrak{z}+\sigma J(\overline{\mathfrak{q}})^\top \mathfrak{e} \in \mathbb{R}^{6+n}$ and the the stack vector $\mathfrak{y} \coloneqq [\mathfrak{e}^\top, \mathfrak{r}^\top]^\top \in \mathbb{R}^{12+n}$. Consider the Lyapunov function $\mathfrak{L}(\mathfrak{y}) = \tfrac{1}{2}\|\mathfrak{y}\|^2$. Its time derivative along the trajectories of the system \eqref{eq:frak_e},\eqref{eq:frak_z} is given by:
\begin{align}
\dot{\mathfrak{L}}(\mathfrak{y}) & = \mathfrak{e}^\top \dot{\mathfrak{e}}+\mathfrak{r}^\top \Big[\dot{\mathfrak{z}}+\sigma J(\overline{\mathfrak{q}})^\top \dot{\mathfrak{e}}+\sigma \dot{J}(\overline{\mathfrak{q}})^\top \mathfrak{e}\Big] \notag \\ 
&\hspace{-5mm} = \left[\mathfrak{e} + \sigma J(\overline{\mathfrak{q}}) \mathfrak{r} \right]^\top \dot{\mathfrak{e}} + \mathfrak{r}^\top \dot{\mathfrak{z}} +\sigma \mathfrak{r}^\top \dot{J}(\overline{\mathfrak{q}})^\top \mathfrak{e} = - \sigma \mathfrak{e}^\top J^{+}(\overline{\mathfrak{q}}) \mathfrak{e}  \notag \\
&\hspace{2mm} + \mathfrak{e}^\top \mathfrak{b}(\cdot) +\sigma \mathfrak{r}^\top J(\overline{\mathfrak{q}})^\top \mathfrak{b}(\cdot) + \mathfrak{e}^\top J(\overline{\mathfrak{q}}) \mathfrak{r} +\sigma \mathfrak{r}^\top J^{+}(\overline{\mathfrak{q}}) \mathfrak{r} \notag \\
&\hspace{2mm}  +\sigma \mathfrak{r}^\top \dot{J}(\overline{\mathfrak{q}})^\top \mathfrak{e} + \mathfrak{r}^\top \mathfrak{l}(\cdot) + \mathfrak{r}^\top (\mathfrak{u}-\overline{u}) + \mathfrak{r}^\top d(\cdot). \label{eq:lyap2}
\end{align}
By invoking \eqref{eq:lyap11} as well as the following:
\begin{align*}
\sigma \mathfrak{r}^\top J(\overline{\mathfrak{q}})^\top \mathfrak{b}(\cdot) & \le \sigma \|\mathfrak{r}\|_{2} \|J(\overline{\mathfrak{q}})\|_{2} \|\mathfrak{b}(\cdot)\|_{2} \le \sigma L_{\scriptscriptstyle \mathfrak{c}} \overline{J} \|\mathfrak{e}\|_{2}  \|\mathfrak{r}\|_{2}, \\
\mathfrak{e}^\top J(\overline{\mathfrak{q}}) \mathfrak{r} & \le \|\mathfrak{e}\|_{2} \|J(\overline{\mathfrak{q}})\|_2 \|\mathfrak{r}\|_{2} \le \overline{J}  \|\mathfrak{e}\|_{2}  \|\mathfrak{r}\|_{2}, \\
\sigma \mathfrak{r}^\top J^{+}(\overline{\mathfrak{q}}) \mathfrak{r} & \le \sigma \|\mathfrak{r}\|_{2}^{2} \|J^{+}(\overline{\mathfrak{q}})\|_{2} \le \sigma \|\mathfrak{r}\|_{2}^{2} \|J(\overline{\mathfrak{q}})\|_{2} \big\|J^{\top}(\overline{\mathfrak{q}}) \big\|_{2} \\
& \le \sigma \overline{J}^2 \|\mathfrak{r}\|_{2}^{2}, \\
\sigma \mathfrak{r}^\top \dot{J}(\overline{\mathfrak{q}})^\top \mathfrak{e} & \le \sigma   \|\mathfrak{e}\|_{2}  \|\dot{J}(\overline{\mathfrak{q}})\|_{2} \|\mathfrak{r}\|_{2} \le \sigma \widetilde{J} \|\mathfrak{e}\|_{2}  \|\mathfrak{r}\|_{2} \\
\mathfrak{r}^\top \mathfrak{l}(\cdot) & \le L \|\mathfrak{e}\|_{2} \|\mathfrak{r}\|_{2} + L \|\mathfrak{r}\|_{2}^{2}, \\
\mathfrak{r}^\top d(\cdot) & \le \|\mathfrak{r}\|_{2} \|d(\cdot)\|_{2} \le \|\mathfrak{y}\|_{2} \widetilde{d},
\end{align*}
\eqref{eq:lyap2} becomes:
\begin{align}
\dot{\mathfrak{L}}(\mathfrak{y}) & \le - \underline{\sigma} \|\mathfrak{e}\|_{2}^{2} + \Lambda_1 \|\mathfrak{e}\|_{2}  \|\mathfrak{r}\|_{2} \notag  \\
&\hspace{16mm} + \Lambda_2 \|\mathfrak{r}\|^{2}_{2} + \mathfrak{r}^\top (\mathfrak{u}-\overline{u}) + \|\mathfrak{y}\|_{2} \widetilde{d}. \label{eq:lyap3}
\end{align}
with $\Lambda_1$, $\Lambda_2$ given in \eqref{eq:Lambda_1}. By using Lemma \ref{lemma:basic_ineq} for $n = P = 1$, we get $\|\mathfrak{e}\|_{2} \|\mathfrak{r}\|_{2}$ $\le \tfrac{1}{4 \rho} \|\mathfrak{e}\|_{2}^{2}$ $+ \rho \|\mathfrak{r}\|_{2}^{2}$,
with $\rho$ designed so that \eqref{eq:sigma_under_sigma} holds. Combining the latter with \eqref{eq:lyap3} it yields:
\begin{align*}
\dot{\mathfrak{L}}(\mathfrak{y}) & \le - \left(\underline{\sigma}- \tfrac{\Lambda_1}{4 \rho} \right) \|\mathfrak{e}\|_{2}^{2}  + \big(\rho \Lambda_1 + \Lambda_2 \big) \|\mathfrak{r}\|^{2}_{2} \notag \\
&\hspace{30mm} + \mathfrak{r}^\top (\mathfrak{u}-\overline{u}) + \|\mathfrak{y}\|_{2} \widetilde{d}.
\end{align*}
By designing $\mathfrak{u} - \overline{u} = -k \mathfrak{r} = -k \mathfrak{e}-k \sigma J(\overline{\mathfrak{q}})^{\top} \mathfrak{z}$, which is compatible with \eqref{eq:control_law_u} and the same as in \eqref{eq:kappa_law}, we have:
\begin{align*}
\dot{\mathfrak{L}}(\mathfrak{y}) & \le - \left(\underline{\sigma}- \tfrac{\Lambda_1}{4 \rho} \right) \|\mathfrak{e}\|_{2}^{2}  - \big(k -\rho \Lambda_1 - \Lambda_2 \big) \|\mathfrak{r}\|^{2}_{2} + \|\mathfrak{y}\|_{2} \widetilde{d} \\
& \le - \min\{\alpha_1, \alpha_2\}\|\mathfrak{y}\|_2^2 + \|\mathfrak{y}\|_2 \widetilde{d} \\
& = -\|\mathfrak{y}\|_2 \big[ \min\{\alpha_1, \alpha_2\}\|\mathfrak{y}\|_2 - \widetilde{d} \big],
\end{align*}
as $\alpha_1$ and $\alpha_2$ given in \eqref{eq:a_1_a_2}. Thus, $\dot{\mathfrak{L}}(\mathfrak{y}) < 0$, when $\|\mathfrak{y}\|_2 > \tfrac{\widetilde{d}}{\min\{\alpha_1, \alpha_2\}}$. Taking the latter into consideration and the fact that $\mathfrak{y}(0)$, we have that $\|\mathfrak{y}(t)\| \le \tfrac{\widetilde{d}}{\min\{\alpha_1, \alpha_2\}}$, $\forall t \ge 0$. Moreover, the following inequalities hold:
\begin{align*}
\|\mathfrak{e}\|_2  &\le \| \mathfrak{y} \|_2 \Rightarrow \|\mathfrak{e}(t)\|_2 \le \tfrac{\widetilde{d}}{\min\{\alpha_1, \alpha_2\}}, \forall t \ge 0, \\
\Big| \|\mathfrak{e}\|_2- \big\|J^\top \mathfrak{z} \big\|_2 \Big| &\le \big\|\mathfrak{e}+J^\top \mathfrak{z} \big\|_2  = \|\mathfrak{z}\|_2 \le \|\mathfrak{y}\|_2 \\
\Rightarrow \|\mathfrak{z}(t)\|_2 &\le \tfrac{2 \widetilde{d}}{\overline{J} \min\{\alpha_1, \alpha_2\}}, \forall t \ge 0. \hspace{34mm} \square
\end{align*}

\begin{remark}
According to Lemma \ref{lemma:tube}, the volume of the tube which is centered along the nominal trajectories $\overline{e}(t)$, $\overline{\zeta}(t)$, that are solution of system \eqref{eq:nom_error_kin},\eqref{eq:nom_error_dyn}, depends on the parameters $\widetilde{d}$, $\overline{J}$, $\underline{J}$, $\widetilde{J}$, $L$ and $L_{\scriptscriptstyle \mathfrak c}$. By tuning the parameters $\rho$ and $k$ from \eqref{eq:sigma_under_sigma} appropriately, the volume of the tube can be adjusted.
\end{remark}

\subsection{On-line Optimal Control} \label{sec:optimal_contol}

Consider a sequence of sampling times $\{t_k\}$, $k \in \mathbb{N}$, with a constant sampling period $0 < h < T$, where $T$ is a prediction horizon such that $t_{k+1} \coloneqq t_{k} + h$, $\forall k \in \mathbb{N}$. At each sampling time $t_k$, a FHOCP is solved as follows:
\begin{subequations}
\begin{align}
&\hspace{-7mm}\min\limits_{\overline{u}(\cdot)} \left\{  \|\overline{\xi}(t_k+T)\|^2_{\scriptscriptstyle P} \hspace{-1mm} + \hspace{-2mm}\int_{t_k}^{t_k+T} \hspace{-1mm}\Big[ \|\overline{\xi}(\mathfrak{s})\|^2_{\scriptscriptstyle Q} +\|\overline{u}(\mathfrak{s})\|^2_{\scriptscriptstyle R} \Big] d\mathfrak{s} \right\} \hspace{0mm} \label{eq:mpc_cost_function} \hspace{-7mm}\\
&\hspace{-6mm}\text{subject to:} \notag \\
&\hspace{-3mm} \dot{\overline{\xi}}(\mathfrak{s}) = g(\overline{\xi}(\mathfrak{s}), \overline{u}(\mathfrak{s})), \ \ \overline{\xi}(t_k) = \xi(t_k), \label{eq:diff_mpc} \\
&\hspace{-3mm} \overline{\xi}(\mathfrak{s}) \in \overline{\mathcal{E}} \times \overline{\mathcal{Z}}, \ \ \overline{u}(\mathfrak{s}) \in \overline{\mathcal{U}},  \ \ \forall \mathfrak{s} \in [t_k,t_k+T], \label{eq:mpc_constrained_set} \\
&\hspace{-3mm} \overline{\xi}(t_k+T)\in \mathcal{F}, \label{eq:mpc_terminal_set}
\end{align}
\end{subequations}
where $\xi \hspace{-1mm}\coloneqq\hspace{-1mm}[e^\top,\zeta^\top]^\top \hspace{-2mm}\in \mathbb{R}^{12+n}$, $g(\xi,u)\hspace{-1mm}$ $\coloneqq\hspace{-1mm}\begin{bmatrix} J(\mathfrak{q}) \zeta \\ f(e+\chi_{\scriptscriptstyle \rm des}, \zeta)+u \end{bmatrix}$; $Q$, $P \in \mathbb{R}^{(12+n) \times (12+n)}$ and $R \in \mathbb{R}^{(6+n) \times (6+n)}$ are positive definite gain matrices to be appropriately tuned. We will explain hereafter the sets $\overline{\mathcal{E}}$, $\overline{\mathcal{V}}$, $\overline{\mathcal{U}}$ and $\mathcal{F}$.

In order to guarantee that while the FHOCP \eqref{eq:mpc_cost_function}-\eqref{eq:mpc_terminal_set} is solved for the nominal dynamics \eqref{eq:nom_error_kin}-\eqref{eq:nom_error_dyn}, the real states $e$, $\zeta$ and control input $\mathfrak{u}$ satisfy the corresponding state $\mathcal{E}$, $\mathcal{Z}$ and input constraints $\mathcal{U}$, respectively, the following modification is performed: $\overline{\mathcal{E}} \coloneqq \mathcal{E} \ominus \Omega_1, \ \ \overline{\mathcal{Z}} \coloneqq \mathcal{Z} \ominus \Omega_2, \ \ \overline{\mathcal{U}} \coloneqq \mathcal{U} \ominus \left[ \Lambda \circ \overline{\Omega} \right]$,
with $\Lambda \coloneqq {\rm diag} \{-k I_6, -k \sigma \overline{J} I_{6+n}\} \in \mathbb{R}^{(12+n) \times (12+n)}$, $\overline{\Omega} \coloneqq \Omega_1 \times \Omega_2$, the operators $\ominus$, $\circ$ as defined in Section \ref{sec:notation_preliminaries}, and $\Omega_1$, $\Omega_2$ as given in \eqref{eq:omega_1}, \eqref{eq:omega_2}, respectively. Intuitively, the sets $\mathcal{E}$, $\mathcal{Z}$ and $\mathcal{U}$ are tightened accordingly, in order to guarantee that while the nominal states $\overline{e}$, $\overline{\zeta}$ and the nominal control input $\overline{u}$ are calculated, the corresponding real states $e$, $\zeta$ and real control input $\mathfrak{u}$ satisfy the state and input constraints $\mathcal{E}$, $\mathcal{Z}$ and $\mathcal{U}$, respectively. This constitutes a standard constraints set modification technique adopted in tube-based NMPC frameworks (for more details see \cite{yu_2013_tube}). Define the \emph{terminal set} by:
\begin{align} \label{eq:terminal_set_F}
\mathcal{F} \coloneqq \big\{\overline{\xi} \in \overline{\mathcal{E}} \times \overline{\mathcal{Z}} : \|\overline{\xi}\|_{\scriptscriptstyle P} \le \epsilon \big\}, \ \  \epsilon > 0,
\end{align}
which is used to enforce the stability of the system \cite{frank_1998_quasi_infinite}. In particular, due to the fact that the linearized nominal dynamics $\dot{\overline{\xi}} = A \overline{\xi} + B \overline{u}$ are stabilizable, it can be proven that (see \cite[Lemma 1, p. 4]{frank_1998_quasi_infinite}) there exists a \emph{local controller} $u_{\scriptscriptstyle \rm loc} \coloneqq \mathfrak{K} \overline{\xi} \in \overline{\mathcal{U}}$, $\mathfrak{K} \in \mathbb{R}^{(6+n) \times (6+n)}$, $\mathfrak{K} > 0$ which guarantees that: $\tfrac{d}{dt}\left(\|\overline{\xi}\|^2_{\scriptscriptstyle P}\right) \le -\|\overline{\xi}\|^2_{\scriptscriptstyle \widetilde{Q}}$, $\forall \overline{\xi} \in \mathcal{F}$, with $\widetilde{Q} \coloneqq Q+\mathfrak{K}^\top R$.

\begin{theorem} \label{theorem_main}
Suppose also that the FHOCP \eqref{eq:mpc_cost_function}-\eqref{eq:mpc_terminal_set} is feasible at time $t = 0$. Then, the feedback control law \eqref{eq:control_law_u} applied to the system \eqref{eq:unsrt_error_kin}-\eqref{eq:unsrt_error_dyn} guarantees that there exists a time $\mathfrak{t}$ such that $\forall t \ge \mathfrak{t}$ it holds that:
\begin{subequations}
\begin{align}
\hspace{0mm} \|\chi(t)-\chi_{\scriptstyle \rm des}\|_{\scriptscriptstyle 2} & \le \tfrac{\epsilon}{\sqrt{\lambda_{\scriptscriptstyle \min}(P)}} + \tfrac{\widetilde{d}}{\min\{\alpha_1, \alpha_2\}}, \label{eq:theom_ineq_1} \\
\hspace{0mm} \|\zeta(t)\|_{\scriptscriptstyle 2} & \le \tfrac{\epsilon}{\sqrt{\lambda_{\scriptscriptstyle \min}(P)}} + \tfrac{2 \widetilde{d}}{\overline{J} \min\{\alpha_1, \alpha_2\}}. \label{eq:theom_ineq_2}
\end{align}
\end{subequations}
\end{theorem}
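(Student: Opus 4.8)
The plan is to combine three facts: recursive feasibility of the FHOCP~\eqref{eq:mpc_cost_function}--\eqref{eq:mpc_terminal_set}, convergence of the nominal trajectory $\overline{\xi}$ into (a neighborhood of) the terminal set $\mathcal{F}$, and the tube bound already established in Lemma~\ref{lemma:tube}. Writing $\xi=[e^\top,\zeta^\top]^\top$ and $\overline{\xi}=[\overline{e}^\top,\overline{\zeta}^\top]^\top$, observe that $\xi-\overline{\xi}=[\mathfrak{e}^\top,\mathfrak{z}^\top]^\top$, so that once the nominal part is driven small and the deviation part $(\mathfrak{e},\mathfrak{z})$ is confined by Lemma~\ref{lemma:tube}, the triangle inequality will deliver~\eqref{eq:theom_ineq_1}--\eqref{eq:theom_ineq_2}.

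First I would prove \emph{recursive feasibility}. Assume the FHOCP is feasible at $t_k$, with optimal nominal input $\overline{u}^{\ast}(\cdot)$ and trajectory $\overline{\xi}^{\ast}(\cdot)$ on $[t_k,t_k+T]$. At $t_{k+1}=t_k+h$ the FHOCP re-initializes $\overline{\xi}(t_{k+1})=\xi(t_{k+1})$, and by Lemma~\ref{lemma:tube} the mismatch satisfies $\xi(t_{k+1})-\overline{\xi}^{\ast}(t_{k+1})\in\overline{\Omega}=\Omega_1\times\Omega_2$. I would then use the standard quasi-infinite-horizon candidate: the tail $\overline{u}^{\ast}(\cdot)$ on $[t_{k+1},t_k+T]$ concatenated with the local controller $u_{\scriptscriptstyle \rm loc}=\mathfrak{K}\overline{\xi}$ on $[t_k+T,t_{k+1}+T]$. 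The resulting candidate nominal trajectory deviates from $\overline{\xi}^{\ast}$ (resp.\ from the terminal local flow) by no more than the tube $\overline{\Omega}$, so, because the constraint sets were tightened to $\overline{\mathcal{E}}=\mathcal{E}\ominus\Omega_1$, $\overline{\mathcal{Z}}=\mathcal{Z}\ominus\Omega_2$, $\overline{\mathcal{U}}=\mathcal{U}\ominus[\Lambda\circ\overline{\Omega}]$ and $\mathcal{F}$ is positively invariant under $u_{\scriptscriptstyle \rm loc}$ with $u_{\scriptscriptstyle \rm loc}\in\overline{\mathcal{U}}$ on $\mathcal{F}$ (from \cite[Lemma~1, p.~4]{frank_1998_quasi_infinite}), the candidate is admissible for the FHOCP at $t_{k+1}$. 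Induction on $k$ then gives feasibility at every sampling time, and the tightening guarantees that the real $e,\zeta,\mathfrak{u}$ meet the original constraints~\eqref{eq:constr} for all $t\ge0$.

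Second I would show \emph{convergence of the nominal state}. Let $V(t_k)$ denote the optimal value of the FHOCP at $t_k$. Plugging the candidate above into the cost and using the terminal inequality $\tfrac{d}{dt}(\|\overline{\xi}\|^2_{\scriptscriptstyle P})\le-\|\overline{\xi}\|^2_{\scriptscriptstyle\widetilde{Q}}$ on $\mathcal{F}$, the running cost over $[t_k,t_{k+1}]$ is subtracted while the terminal-cost change is dominated by it, up to a correction of order $\widetilde{d}$ coming from the re-initialization mismatch $\overline{\Omega}$. Hence $V(t_k)$ decreases strictly as long as $\overline{\xi}(t_k)$ lies outside a sublevel set of $\|\cdot\|_{\scriptscriptstyle P}$ comparable to $\epsilon$; since $V$ is bounded below, $\overline{\xi}(t)$ enters $\mathcal{F}$ in finite time, say for $t\ge\mathfrak{t}$, and by invariance remains there, so $\|\overline{\xi}(t)\|_{\scriptscriptstyle P}\le\epsilon$, and therefore $\|\overline{e}(t)\|_2,\|\overline{\zeta}(t)\|_2\le\epsilon/\sqrt{\lambda_{\scriptscriptstyle\min}(P)}$ for all $t\ge\mathfrak{t}$.

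Finally I would \emph{assemble the estimate}: for $t\ge\mathfrak{t}$, $\|\chi(t)-\chi_{\scriptscriptstyle\rm des}\|_2=\|e(t)\|_2\le\|\overline{e}(t)\|_2+\|\mathfrak{e}(t)\|_2\le\epsilon/\sqrt{\lambda_{\scriptscriptstyle\min}(P)}+\widetilde{d}/\min\{\alpha_1,\alpha_2\}$, using the RCI bound for $\Omega_1$ from Lemma~\ref{lemma:tube}; likewise $\|\zeta(t)\|_2\le\|\overline{\zeta}(t)\|_2+\|\mathfrak{z}(t)\|_2\le\epsilon/\sqrt{\lambda_{\scriptscriptstyle\min}(P)}+2\widetilde{d}/(\overline{J}\min\{\alpha_1,\alpha_2\})$, using the RCI bound for $\Omega_2$, which are exactly~\eqref{eq:theom_ineq_1}--\eqref{eq:theom_ineq_2}. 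I expect the recursive-feasibility step to be the main obstacle: one must verify that the re-initialized candidate trajectory actually stays inside the \emph{tightened} sets $\overline{\mathcal{E}}\times\overline{\mathcal{Z}}$ and that the full control --- the nominal candidate plus the tube feedback $\kappa$ of~\eqref{eq:kappa_law}, whose size is captured by $\Lambda\circ\overline{\Omega}$ --- stays inside $\overline{\mathcal{U}}$; this is precisely where the specific construction of $\Omega_1,\Omega_2$ and of the gain $\Lambda$ matters, the remaining steps being the by-now-standard sampled-data quasi-infinite-horizon NMPC argument.
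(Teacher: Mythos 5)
Your proposal follows essentially the same route as the paper's proof: recursive feasibility of the FHOCP, finite-time convergence of the nominal state $\overline{\xi}$ into the terminal set $\mathcal{F}$ (giving $\|\overline{\xi}(t)\|_{2}\le\epsilon/\sqrt{\lambda_{\min}(P)}$), and the triangle inequality combined with the RCI bounds for $\Omega_1$, $\Omega_2$ from Lemma~\ref{lemma:tube}. The only difference is that you sketch the shifted-candidate feasibility argument and the value-function decrease explicitly, whereas the paper delegates both to citations (\cite{alex_IJRNC_2018} and \cite{frank_1998_quasi_infinite}); your elaboration is consistent with what those references establish.
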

\begin{proof}
The proof of the theorem consists of two parts:
	
\noindent \textbf{Feasibility Analysis}: It can be shown that recursive feasibility is established and it implies subsequent feasibility. The proof of this part is similar to the feasibility proof of \cite[Theorem 2, Sec. 4, p. 12]{alex_IJRNC_2018}, and it is omitted here due to space constraints.
	
\noindent \textbf{Convergence Analysis}: Recall that $e = \chi-\chi_{\scriptscriptstyle \rm des}$, $\mathfrak{e} = e-\overline{e}$ and $\mathfrak{z} = \zeta-\overline{\zeta}$. Then, we get $\|\chi(t)-\chi_{\scriptstyle \rm des}\|_{\scriptscriptstyle 2}$  $\le \|\overline{e}(t)\|_{\scriptscriptstyle 2} + \|\mathfrak{e}(t)\|_{\scriptscriptstyle 2}$, $\|\zeta(t)\|_{\scriptscriptstyle 2}$ $\le \|\overline{\zeta}(t)\|_{\scriptscriptstyle 2} + \|\mathfrak{z}(t)\|_{\scriptscriptstyle 2}$, which, by using the fact that $\|\overline{e}\|$, $\|\overline{\zeta}\| \le \|\overline{\xi}\|_{2}$ as well as the bounds from \eqref{eq:omega_1}, \eqref{eq:omega_2} the latter inequalities become:
\begin{subequations}
\begin{align}
\|\chi(t)-\chi_{\scriptstyle \rm des}\|_{\scriptscriptstyle 2} & \le \|\overline{\xi}(t)\|_{\scriptscriptstyle 2} +  \tfrac{ \widetilde{d}}{\min\{\alpha_1, \alpha_2\}}, \label{eq:conv_1}\\
\|\zeta(t)\|_{\scriptscriptstyle 2} & \le \|\overline{\xi}(t)\|_{\scriptscriptstyle 2} +  \tfrac{2 \widetilde{d}}{\overline{J} \min\{\alpha_1, \alpha_2\}}, \forall t \ge 0. \label{eq:conv_2}
\end{align}
\end{subequations}
The nominal state $\overline{\xi}$ is controlled by the nominal control action $\overline{u} \in \overline{\mathcal{U}}$ which is the outcome of the solution to the  FHOCP \eqref{eq:mpc_cost_function}-\eqref{eq:mpc_terminal_set} for the nominal dynamics \eqref{eq:nom_error_kin}-\eqref{eq:nom_error_dyn}. Hence, by invoking previous NMPC stability results found in \cite{frank_1998_quasi_infinite}, the state $\overline{\xi}(t)$ is driven to terminal set $\mathcal{F}$, given in \eqref{eq:terminal_set_F}, in finite time, and it remains there for all times. Thus, there exist a finite time $\mathfrak{t}$ such that $\overline{\xi}(t) \in \mathcal{F}$, $\forall t \ge \mathfrak{t}$. From \eqref{eq:terminal_set_F}, the latter implies that: $\|\overline{\xi}(t)\|_{\scriptscriptstyle P} \le \epsilon, \forall t \ge \mathfrak{t} \Rightarrow \|\overline{\xi}(t)\|_{\scriptscriptstyle 2} \le \tfrac{\epsilon}{\sqrt{\lambda_{\scriptscriptstyle \min}(P)}}, \forall t \ge \mathfrak{t}.$ The latter implication combined by \eqref{eq:conv_1}-\eqref{eq:conv_2} leads to the conclusion of the proof.
\end{proof}

\begin{figure}[t!]
	\centering
	\includegraphics[scale = 0.4]{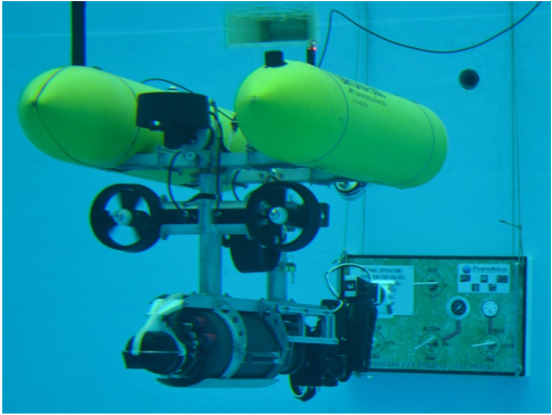}
	\caption{ The GIRONA-UVMS composed of Girona500 AUV and ARM 5E Micro manipulator \cite{cieslak2015autonomous}.}\label{fig:girona}
\end{figure}
\section{Simulation Results} \label{sec:simulation_results}
For a simulation scenario, consider the Girona 500 AUV depicted in Fig. \ref{fig:girona} equipped with an ARM 5E Micro manipulator from \cite{cieslak2015autonomous}. The manipulator consists of $n = 4$ revolute joints with limits: $-0.52 \le q_1 \le 1.46$, $\-0.1471 \le q_2 \le 1.3114$, $-1.297 \le q_3 \le 0.73$ and $-3.14 \le q_4 \le 3.14$. The end-effector is in ready-to-grasp mode with initial state: $\chi(0) = [\mathfrak{p}(0)^\top, \mathfrak{o}(0)^\top]^\top =[-1.0, 1.3, -1.0, 0.0, -\tfrac{\pi}{8}, \tfrac{\pi}{12}]^\top$. The stiffness matrix is $K = I_{6}$ with $\chi_{\scriptscriptstyle \rm eq} = 0$ which results to $\mathfrak{F}_{\scriptscriptstyle \rm des} = \chi_{\scriptscriptstyle \rm des} = [\mathfrak{p}_{\scriptscriptstyle \rm des}^\top, \mathfrak{o}_{\scriptscriptstyle \rm des}^\top ]^\top = [0, 0, 0, \tfrac{\pi}{3}, \tfrac{\pi}{10}, 0]^\top$.
According to \eqref{eq:forw_kinematics}, the transformation matrices which lead to the forward kinematics are given by:
\begin{align*}
T_B^I & = 
\begin{bmatrix}
\mathfrak{J}_1(\eta_2) & \eta_1 \\
0_{1 \times 3} & 1 \\
\end{bmatrix}, \ \ T_0^B = 
\begin{bmatrix}
I_{3 \times 3} & \left[ 0.53, 0, 0.36 \right]^\top \\
0_{1 \times 3} & 1 \\
\end{bmatrix},
\end{align*}
and $T_{i}^{i-1}$, $i =1,\dots,4$ are given by the Denavit-Hantenberg parameters which can be calculated from Table \ref{table:DH_parameters}. By imposing the constraints $-\pi \le \phi$, $\psi \le \pi$ and $-\tfrac{\pi}{2}+\epsilon \le \theta \le \tfrac{\pi}{2} - \epsilon$, $\epsilon = 0.1$, according to \eqref{eq:set_Q} we get $\underline{J} = 0.5095$ and $L_{\scriptscriptstyle \mathfrak{c}} = 2 \sqrt{2}$. For simplified calculations, we apply the methodology of this paper by considering disturbance in the following disturbed kinematic model: $\dot{\chi} = J(\mathfrak{q}) \zeta + w(\mathfrak{q}, t)$,
with $w(\cdot) = 0.2 \sin(t) I_6$ $\Rightarrow \|w(\cdot)\|_{2} \le 0.2 = \widetilde{w}$, in which the vector $\zeta$ stands for the virtual control input to be designed such that $\lim_{t \to \infty} \|\chi(t) - \chi_{\scriptscriptstyle \rm des}\| \to 0$. The input constraints are set to $\|\nu_1\|_2 \le 2$, $\|\nu_2\|_2 \le 2$ and $\|\dot{q}\|_2 \le 2$. Then, by using \eqref{eq:frak_e} and \eqref{eq:lyap1} and designing the control gain $\sigma = 3.084$, the resulting RCI is $\Omega = \left\{\mathfrak{e} \in \mathbb{R}^{6} : \|\mathfrak{e}\|_{2} \le \tfrac{\widetilde{w}}{\sigma \underline{J} + L_{\scriptscriptstyle \mathfrak{c}}} = 0.3 \right\}$.
\begin{table}[t!]
\begin{center}
\begin{tabular}{|C{0.4cm}||C{1.2cm}||C{1.2cm}||C{1.2cm}||C{1.2cm}|}
\hline
& $d_i (m)$ & $q_i$ & $a_i (m)$ & $\alpha_i (\text{rad})$ \\
\hline \hline
$1$  & $0$ & $q_1$ & $0.1$ & $-\frac{\pi}{2}$ \\
\hline
$2$ & $0$ & $q_2$ & $0.26$ & $0$ \\
\hline
$3$ & $0$ & $q_3$ & $0.09$ & $\frac{\pi}{2}$  \\
\hline
$4$ & $0.29$ & $q_4$ & $0$ & $0$ \\
\hline \hline
E &  \multicolumn{4}{c|}{$\text{Rot}(y,-\frac{\pi}{2})$} \\
\hline
\end{tabular}
\end{center}
\caption{Denavit-Hantenberg Parameters of the ARM 5E Micro}
\label{table:DH_parameters}
\end{table}
The simulation time is $6 \sec$. The optimization horizon and the sampling time are set to $T = 0.7 \sec$ and $h = 0.1 \sec$, respectively. The NMPC gains are set to $Q = P = 0.5 I_{6}$ and $R = 0.5 I_{10}$. Fig. \ref{fig:error} shows the evolution of the real and the nominal position errors of the end-effector. the corresponding real and nominal orientation errors are depicted in Fig. \ref{fig:error2}. Finally, the control inputs are presented in Fig. \ref{fig:inputs}. It can be observed that the desired task is performed while all the state/input constraints are satisfied.


\section{Conclusions and Future Research} \label{sec:conclusions}

This paper addresses the problem of force/torque control of UVMS under state/input constraints as well as external uncertainties/disturbances. In particular, we have proposed a tube-based robust NMPC framework that incorporates the aforementioned constraints in a novel way. Future efforts will be devoted towards extending the current framework under multi-UVMS which interact with each other through a common object in order to perform a collaborative manipulation task. 

\begin{figure}[t!]
\centering
\includegraphics[scale = 0.45]{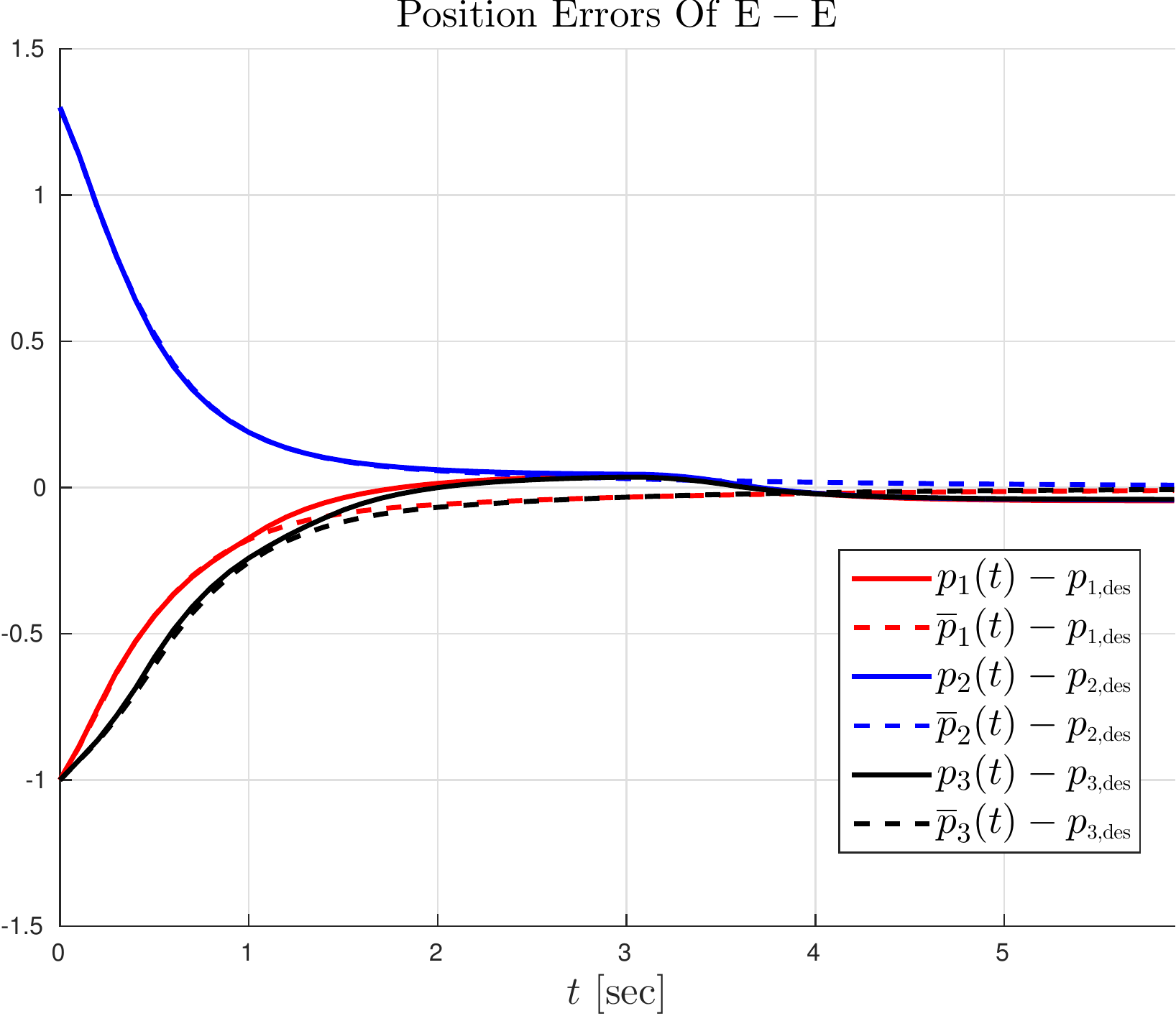}
\caption{The evolution of the real position errors of the end-effector $\mathfrak{p}_1(t)-\mathfrak{p}_{\scriptscriptstyle \rm 1,des}$, $\mathfrak{p}_2(t)-\mathfrak{p}_{\scriptscriptstyle \rm 2,des}$, $\mathfrak{p}_3(t)-\mathfrak{p}_{\scriptscriptstyle \rm 3,des}$ depicted with solid lines as well as the corresponding nominal position errors $\overline{\mathfrak{p}}_1(t)-\mathfrak{p}_{\scriptscriptstyle \rm 1,des}$, $\overline{\mathfrak{p}}_2(t)-\mathfrak{p}_{\scriptscriptstyle \rm 2,des}$, $\overline{\mathfrak{p}}_3(t)-\mathfrak{p}_{\scriptscriptstyle \rm 3,des}$ depicted with dashed lines.}\label{fig:error}
\end{figure}

\begin{figure}[t!]
\centering
\includegraphics[scale = 0.45]{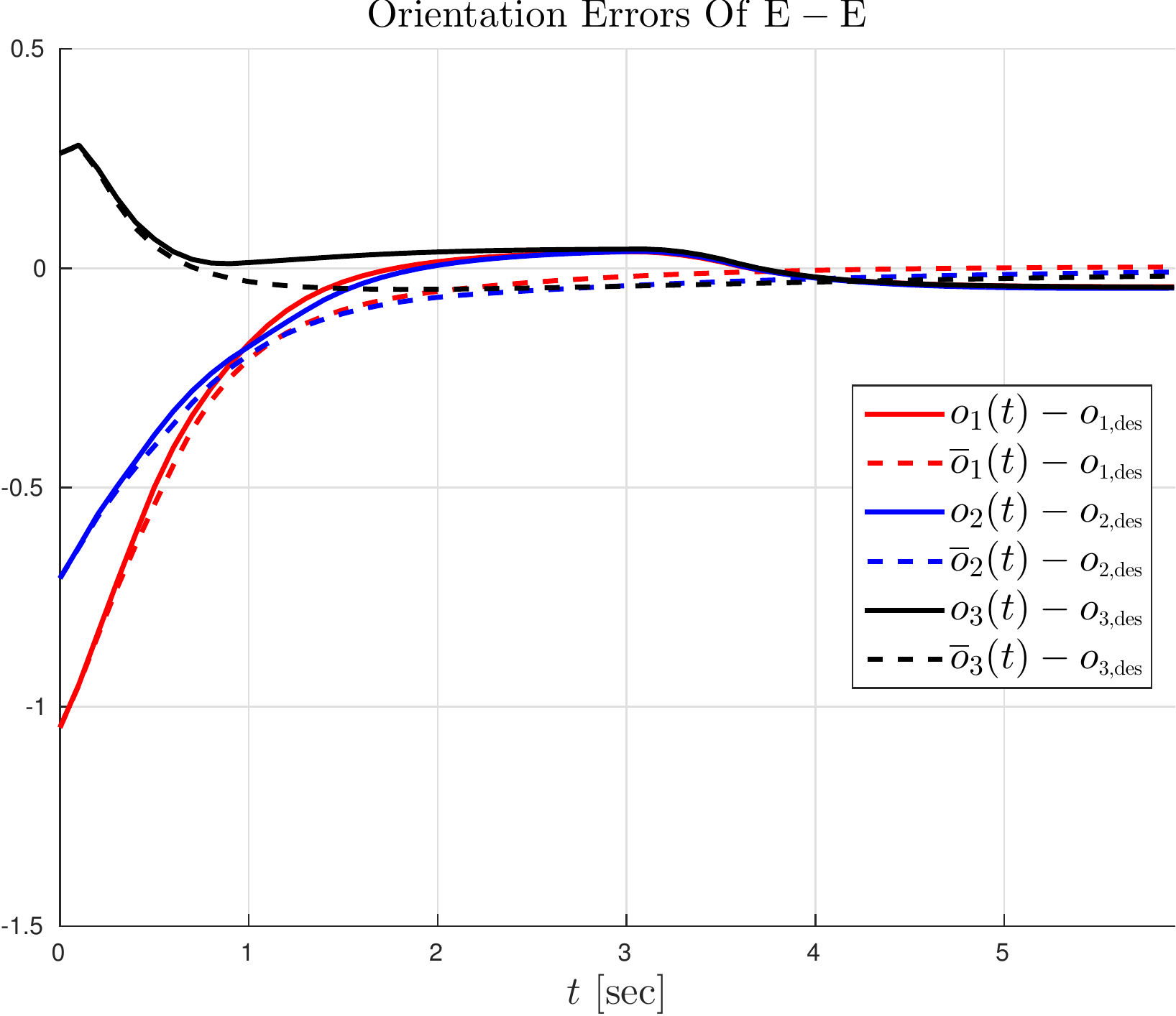}
\caption{The evolution of the real orientation errors $\mathfrak{o}_1(t)-\mathfrak{o}_{\scriptscriptstyle \rm 1,des}$, $\mathfrak{o}_2(t)-\mathfrak{o}_{\scriptscriptstyle \rm 2,des}$, $\mathfrak{o}_3(t)-\mathfrak{o}_{\scriptscriptstyle \rm 3,des}$ depicted with solid lines as well as the corresponding nominal orientation errors $\overline{\mathfrak{o}}_1(t)-\mathfrak{o}_{\scriptscriptstyle \rm 1,des}$, $\overline{\mathfrak{o}}_2(t)-\mathfrak{o}_{\scriptscriptstyle \rm 2,des}$, $\overline{\mathfrak{o}}_3(t)-\mathfrak{o}_{\scriptscriptstyle \rm 3,des}$ depicted with dashed lines.}\label{fig:error2}
\end{figure}
\begin{figure}[t!]
\centering
\includegraphics[scale = 0.45]{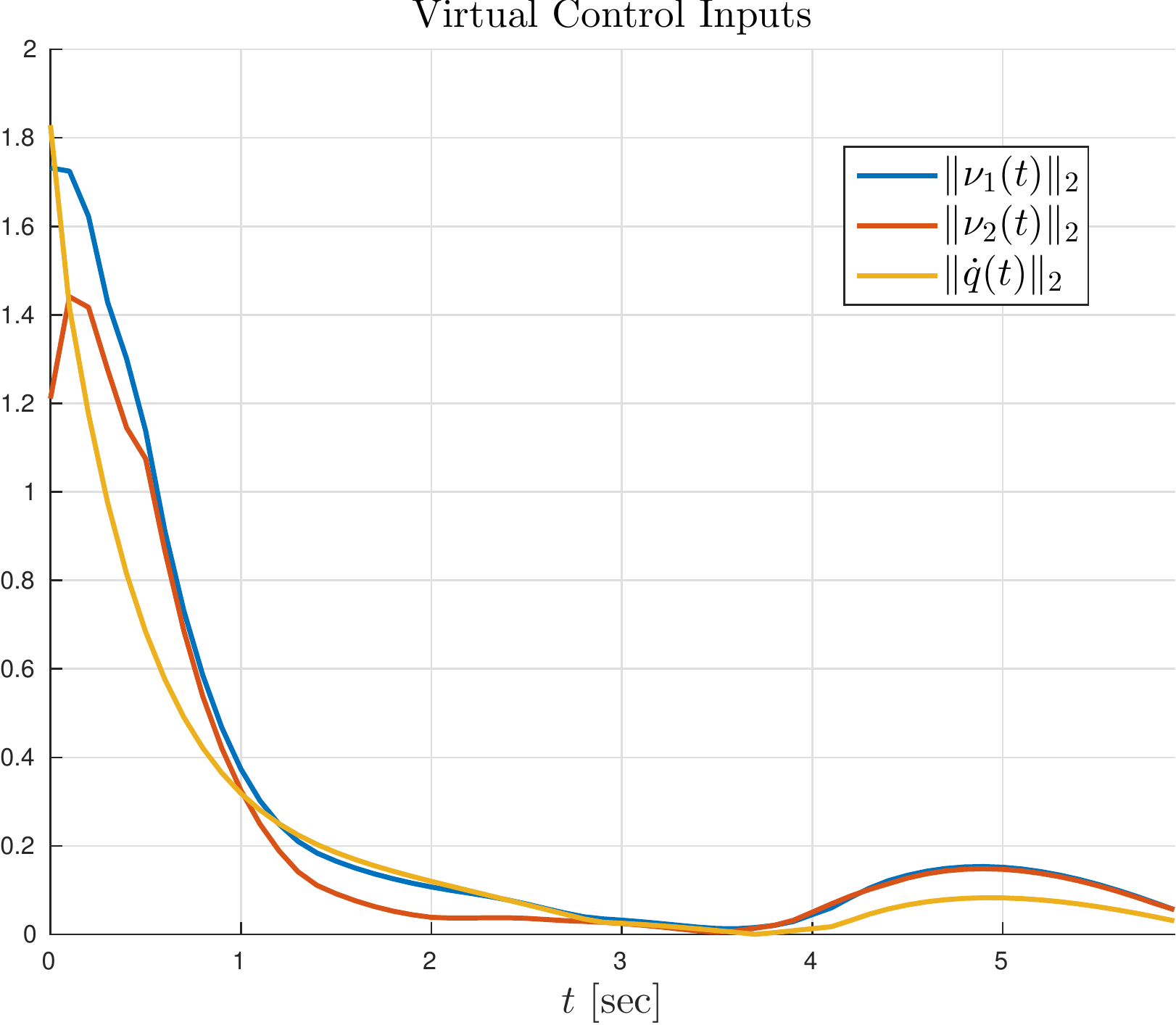}
\caption{The virtual control input signals $\|\nu_1(t)\|_{2}$, $\|\nu_2(t)\|_{2}$ and $\|\dot{q}(t)\|_2$ of the kinematic model \eqref{eq:kinematics}. It holds that $\|\nu_1\|_2 \le 2$, $\|\nu_2\|_2 \le 2$ and $\|\dot{q}\|_2 \le 2$.}\label{fig:inputs}
\end{figure}

\bibliographystyle{ieeetr}
\bibliography{references}
\end{document}